\newtheorem{theorem}{Theorem}[section]
\newtheorem{corollary}[theorem]{Corollary}
\theoremstyle{definition}
\newtheorem{definition}[theorem]{Definition}
\newtheorem{example}[theorem]{Example}
\newcommand{\clb}{\mathscr{B}}
\newcommand{\raro}{\rightarrow}
\newcommand{\vp}{\varphi}
\newcommand{\bd}{\mathbb{D}}
\newcommand{\bc}{\mathbb{C}}
\title[A Bishop-Phelps-Bollob\'{a}s theorem for $A(\mathbb{D})$]{A Bishop-Phelps-Bollob\'{a}s theorem for disc algebra}
\author[Bala]{Neeru Bala}
\address{Indian Statistical Institute, Statistics and Mathematics Unit, 8th Mile, Mysore Road, Bangalore, 560 059, India}
\email{neerusingh41@gmail.com }
\author[Sarkar]{Jaydeb Sarkar}
\address{Indian Statistical Institute, Statistics and Mathematics Unit, 8th Mile, Mysore Road, Bangalore, 560 059, India}
\email{jaydeb@gmail.com, jay@isibang.ac.in}
\author[Sensarma]{Aryaman Sensarma}
\address{Indian Statistical Institute, Statistics and Mathematics Unit, 8th Mile, Mysore Road, Bangalore, 560 059, India}
\email{aryamansensarma@gmail.com}
\subjclass{46E15, 46E22, 46H10, 30H05, 47L20, 46J10}
\keywords{Bishop-Phelps-Bollob\'{a}s property, disc algebra, norm attaining operators, operator ideals, equicontinuity.}
\numberwithin{equation}{section}
\begin{document}
	
\begin{abstract}
Let $\mathbb{D}$ represent the open unit disc in $\mathbb{C}$. Denote by $A(\mathbb{D})$ the disc algebra, and $\mathscr{B}(X, A(\mathbb{\mathbb{D}}))$ the Banach space of all bounded linear operators from a Banach space $X$ into $A(\mathbb{D})$. We prove that, under the assumption of equicontinuity at a point in $\partial \mathbb{D}$, the Bishop-Phelps-Bollob\'{a}s property holds for $\mathscr{B}(X, A(\mathbb{\mathbb{D}}))$.
\end{abstract}
	

\maketitle

\section{Introduction}\label{sec: intro}

Norm attainment is one of the most natural properties that bounded linear operators or functionals acting on Banach spaces can have. Given that this property is automatic for functionals or operators acting on finite-dimensional Banach spaces but not for those acting on infinite-dimensional Banach spaces, one is tempted to wonder about the behavior of norm-attaining functionals or operators acting on infinite-dimensional Banach spaces (see the survey \cite{Aron 3}). Recall that a bounded linear operator $T : X \raro Y$ between Banach spaces $X$ and $Y$ ($T \in \clb(X, Y)$ in short, and $T \in \clb(X)$ if $Y = X$) is \textit{norm attaining} if there exists a unit vector $x_0 \in S_X$ such that
\[
\|T\|_{\clb(X, Y)} =	\|T x_0\|_Y,
\]
where $S_X = \{x \in X: \|x\|=1\}$, the unit sphere of $X$. All the Banach spaces considered in this paper are over the field $\mathbb{C}$.
	
In reference to the nature of norm-attaining functionals on Banach spaces, the classical Bishop-Phelps theorem states \cite{BP}:

\begin{theorem}[Bishop-Phelps]
The set of norm-attaining functionals on a Banach space is norm dense in the dual space.
\end{theorem}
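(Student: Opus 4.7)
The plan is to prove the theorem via the classical support-point argument of Bishop and Phelps. Given $f \in X^*$ with $\|f\|=1$ and $\epsilon \in (0,1)$, the goal is to produce a norm-attaining $g \in X^*$ satisfying $\|f - g\| < \epsilon$; density in $X^*$ then follows by scaling.

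The first step is to introduce the closed convex cone
\[
K_\epsilon = \{x \in X : \tfrac{\epsilon}{2}\|x\| \leq \mathrm{Re}\, f(x)\}
\]
and the induced partial order $x \preceq y$ iff $y - x \in K_\epsilon$. Pick any $x_0 \in B_X$ with $\mathrm{Re}\, f(x_0)$ near $1$ and consider the set $\{x \in B_X : x_0 \preceq x\}$. The second step is to apply Zorn's lemma to produce a $\preceq$-maximal element $x_1$ in this set. To verify the chain hypothesis, I would observe that membership in $K_\epsilon$ yields the Lipschitz-type bound $\tfrac{\epsilon}{2}\|y-x\| \leq \mathrm{Re}\, f(y) - \mathrm{Re}\, f(x)$ whenever $x \preceq y$; since $\mathrm{Re}\, f$ is bounded above by $1$ on $B_X$, each $\preceq$-chain is Cauchy and its norm limit, guaranteed by completeness of $X$, lies in $B_X$ and dominates the chain.

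Once $x_1$ is in hand, maximality forces $\|x_1\| = 1$ and, more importantly, $(x_1 + K_\epsilon) \cap \mathrm{int}(B_X) = \emptyset$. The third step is to invoke the Hahn-Banach separation theorem on these two disjoint convex sets to produce a nonzero $g \in X^*$ supporting both at $x_1$. Since $g$ peaks on $B_X$ at $x_1$, it is norm-attaining. Because $g$ is nonnegative on the cone $K_\epsilon$, one can estimate $\|f - g\|$ by controlling how far $f$ deviates from $g$ on the cone's complement; the slenderness of $K_\epsilon$, dictated by the aperture $\epsilon/2$, is precisely what converts the separation inequality into $\|f - g\| \leq \epsilon$ after normalization of $g$.

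The genuinely delicate step, and the place where the geometry of the Banach space enters nontrivially, is the chain hypothesis in Zorn's lemma: everything else is a careful bookkeeping of the cone inequality and the Hahn-Banach separation, but the Cauchy argument for $\preceq$-chains is the analytic core on which the whole proof rests. The aperture parameter $\epsilon/2$ is chosen so that this bookkeeping closes cleanly with the target error $\epsilon$.
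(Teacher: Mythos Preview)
The paper does not give its own proof of this statement: the Bishop--Phelps theorem is merely quoted as a classical result with a citation to \cite{BP}, and is used later only through its quantitative refinement (Theorem~\ref{BPB}). There is therefore nothing in the paper to compare your argument against.

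That said, your sketch is the standard Bishop--Phelps support-point argument and is essentially correct. Two minor remarks. First, since the paper works over $\mathbb{C}$, you should note explicitly that the separating functional $g$ produced by Hahn--Banach is a priori real-linear; one then passes to its complexification $g(x)=g_{\mathbb R}(x)-ig_{\mathbb R}(ix)$, which has the same norm and still attains it at $x_1$. Second, the final step---deducing $\|f-g\|\le\epsilon$ from the fact that $\mathrm{Re}\,g\ge 0$ on $K_\epsilon$---is the one place where your write-up is genuinely hand-wavy: this is a short but nontrivial lemma (often attributed to Phelps) comparing two unit functionals when one is nonnegative on the other's $\epsilon/2$-cone, and it deserves at least a line of justification rather than an appeal to ``slenderness.''
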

	
Bollob\'{a}s sharpened this amazing result, which gives a simultaneous approximation of functionals close to norm attainment at a point by norm-attaining functionals while keeping the point of norm attainment close to the given point. More specifically \cite[Corollary 3.3]{CASCALES}:

\begin{theorem}[Bishop-Phelps-Bollob\'{a}s]\label{BPB}
Let $X$ be a Banach space, $x\in S_X$, $f\in S_{X^*}$, and let $\epsilon \in (0, 1)$. Suppose
\[
|f(x)| > 1-\epsilon.
\]
Then there exist $y \in S_X$ and $g \in S_{X^*}$ such that
\begin{enumerate}
\item $|g(y)|=1$,
\item $\|x-y\|<\sqrt{2\epsilon}$, and
\item $\|f-g\|<\sqrt{2\epsilon}$.
\end{enumerate}
\end{theorem}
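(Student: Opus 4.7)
The plan is to adapt the classical Phelps cone argument. First, by replacing $f$ with $\bar{\omega} f$ where $\omega = f(x)/|f(x)|$, we may assume $\mathrm{Re}\, f(x) = f(x) > 1 - \epsilon$; the unimodular phase drops out of both estimates at the end. The whole argument can be run on the underlying real space, since a complex-linear functional is determined by its real part via $g(z) = \mathrm{Re}\, g(z) - i\,\mathrm{Re}\, g(iz)$.

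Next, set $k = \sqrt{2/\epsilon}$ and introduce the Bishop-Phelps cone
\[
K = \{z \in X : \mathrm{Re}\, f(z) \geq \|z\|/k\},
\]
which is closed and convex, together with the associated partial order $z \preceq w \iff w - z \in K$. I would apply Zorn's lemma to the set $A = \{z \in B_X : z \succeq x\}$. The verification that every $\preceq$-chain in $A$ has an upper bound rests on the cone inequality
\[
\|z_\beta - z_\alpha\| \leq k\bigl(\mathrm{Re}\, f(z_\beta) - \mathrm{Re}\, f(z_\alpha)\bigr) \qquad (z_\alpha \preceq z_\beta);
\]
since $\mathrm{Re}\, f$ is bounded by $1$ and monotone along the chain, this forces Cauchyness of the chain and hence convergence inside $A$. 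A $\preceq$-maximal element $y \in A$ therefore exists, and the same cone inequality applied to $x \preceq y$ gives $\|y - x\| \leq k(\mathrm{Re}\, f(y) - \mathrm{Re}\, f(x)) < k\epsilon = \sqrt{2\epsilon}$.

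The maximality of $y$ forces $\|y\| = 1$ (otherwise a small $K$-perturbation of $y$ would stay in $B_X$ and strictly exceed $y$), and, more importantly, it forces $(y + K) \cap B_X = \{y\}$. The geometric Hahn-Banach theorem then produces a nonzero $g \in X^*$ that simultaneously supports $B_X$ at $y$ and is nonnegative-real on $K$. Normalizing and adjusting the phase gives $\|g\| = g(y) = 1$, so $g$ attains its norm at $y$.

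The main obstacle is the final estimate $\|f - g\| < \sqrt{2\epsilon}$. Since both $f$ and $g$ are norm-one and are nonnegative-real on $K$, the idea is to decompose an arbitrary unit vector $z \in B_X$ as a sum of a vector in $K$ and a vector of small controlled norm; on the $K$-component the difference $f - g$ is suppressed by the sign information, while the small-norm remainder contributes at most $2/k$. The calibration $k = \sqrt{2/\epsilon}$ is chosen precisely so that the two bounds $\|y - x\| < k\epsilon$ and $\|f - g\| \leq 2/k$ both collapse to the promised $\sqrt{2\epsilon}$; any looser choice of $k$ would force one of the two estimates to exceed the target constant.
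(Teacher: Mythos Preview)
The paper does not supply its own proof of this theorem; it is quoted as a known result with a reference to \cite[Corollary 3.3]{CASCALES}, so there is no in-paper argument to compare against.

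Your proposal follows the classical Bishop--Phelps cone route (pass to real parts, introduce the cone $K$, extract a $\preceq$-maximal element via Zorn, separate $y+K$ from $B_X$ by Hahn--Banach, then bound $\|f-g\|$), and as an outline it is correct and standard. Two places would need tightening in a full write-up. First, your final paragraph only \emph{describes} the mechanism behind the bound $\|f-g\|\le 2/k$; the actual ingredient is Phelps' parallel-hyperplane lemma (from $g\ge 0$ on $K$ one gets $|f(w)|\le\|w\|/k$ whenever $g(w)=0$, and then decomposes a unit vector through $y$), and you should either prove it or cite it, noting that the sign ambiguity between $g$ and $-g$ is resolved by $g(y)=1$ together with $f(y)>0$. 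Second, the cone argument as stated delivers only $\|f-g\|\le 2/k=\sqrt{2\epsilon}$ with a weak inequality; to match the strict inequality in the statement, first choose $\epsilon'\in(0,\epsilon)$ with $|f(x)|>1-\epsilon'$ (possible because the hypothesis is strict) and run the whole argument with $\epsilon'$ in place of $\epsilon$.
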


Now we will talk about the follow-up question that comes up naturally: norm-attainment for operators on Banach spaces. Lindenstrauss \cite{LINDENSTRAUSS} pioneered the study of norm-attainment or Bishop-Phelps type property for operators between Banach spaces. And finally, Acosta, Aron, Garc\'{i}a, and Maestre \cite[Definition 1.1]{ACOSTA} introduced the notion of Bishop-Phelps-Bollob\'{a}s property for operators on Banach spaces.
	
\begin{definition}\label{def: BPB operators}
A pair of Banach spaces $(X,Y)$ satisfy the Bishop-Phelps-Bollob\'{a}s property, if for every $\epsilon>0$, there exist $\beta(\epsilon) > 0$
and $\gamma(\epsilon)>0$ with  $\lim_{t \raro 0} \beta(t)= 0$ such that for all $T\in S_{\clb(X,Y)}$ and $x \in S_X$ with
\[
\|Tx\|> 1 - \gamma,
\]
then there exist $y \in S_X$ and $N \in S_{\mathscr{B}(X,Y)}$ such that
\begin{enumerate}
\item $\|N y\|=1$,
\item $\|x - y\|<\beta$, and
\item $\|T - N\|<\epsilon$.
\end{enumerate}
If $X = Y$, then we simply say that $X$ satisfies the Bishop-Phelps-Bollob\'{a}s property.
\end{definition}

Following the case of functionals on Banach spaces, one might wonder if the Bishop-Phelps-Bollob\'{a}s theorem also applies to $\clb(X,Y)$ for Banach spaces $X$ and $Y$. However, it follows from the same paper of Lindenstrass \cite{LINDENSTRAUSS} that an arbitrary pair of Banach spaces does not satisfy the Bishop-Phelps-Bollob\'{a}s property. Since then, identifying Banach spaces that satisfy the Bishop-Phelps-Bollob\'{a}s property has proven to be a challenging as well as important problem.

Let us highlight some of the noteworthy results in this direction. In \cite{ACOSTA}, Acosta, Aron, García, and Maestre presented a condition on $Y$ for which $(X, Y)$ satisfy the Bishop-Phelps-Bollob\'{a}s property for arbitrary Banach space $X$. In particular, if $Y$ is $c_0$ or $\ell^{\infty}$, then for arbitrary Banach space $X$, the pair $(X,Y)$ satisfy the Bishop-Phelps-Bollob\'{a}s property. If $X=C(K)$ and $Y=C(M)$ for some compact Hausdorff spaces $K$ and $M$, then $(X, Y)$ satisfy the Bishop-Phelps-Bollob\'{a}s property in the real case \cite{ACOSTA 2014} (the complex case, however, is still an open question). In this regard, also see Johnson and Wolfe \cite{JOHNSON}. Furthermore, in \cite{CASCALES,Kim}, it is proved that, if $Y$ is a subalgebra of $C(K)$ or $C_b(K)$, then for arbitrary Banach space $X$, the pair $(X,Y)$ satisfy the Bishop-Phelps-Bollob\'{a}s property for Asplund operators. Another example is the space $H^\infty(\bd)$ \cite{Bala}, which will be discussed in the following paragraph. We refer the reader to \cite{Aron 1, Carando} for more on Bishop-Phelps-Bollob\'{a}s in different contexts.

Now we turn to disc algebra, the central object of this paper. The disc algebra $A(\mathbb{D})$ is the commutative Banach algebra of all bounded analytic functions on the open unit disc $\bd$ with a continuous extension on $\overline{\mathbb{D}}$. Therefore
\[
A(\bd) = \text{Hol}(\mathbb{D}) \cap C(\overline{\bd}).
\]
In this context, we recall that $A(\bd)$ is a closed subalgebra of $H^\infty(\bd)$, the commutative Banach algebra of bounded analytic functions on $\bd$. The space $H^\infty(\bd)$ (as well as its closed subalgebra $A(\bd)$) is equipped with the uniform norm
\[
\|\vp\|_\infty = \sup_{z \in \bd} |\vp(z)| \qquad (\vp \in H^\infty(\bd)).
\]
There are many reasons to study disc algebra. In fact, disc algebra is one of the most important and concrete non-selfadjoint commutative Banach algebras \cite{B1,B2,FT}. Since this space is more ``tractable'' (than, say, $H^\infty(\bd)$), it can be used to test a theory of interest. Another factor that intrigued our interest in disc algebra had been a recent paper \cite{Bala} that revealed that the Bishop-Phelps-Bollob\'{a}s property holds true for $H^\infty(\bd)$. Clearly, $H^\infty(\bd)$ is another concrete example of non-selfadjoint operator algebra. Finding a follow-up solution for disc algebra appears to be a pressing issue, which is exactly what we do in this paper. However, we have not yet been able to fully settle the question for $A(\bd)$ in this circumstance, despite the fact that a priori we ``know'' that $H^\infty(\bd)$ is a more complex object than $A(\bd)$. We assume in addition that the image of the unit sphere of the given operator satisfies the equicontinuity property at a point in $\mathbb{T}$.

Therefore, the main contribution of this paper is the fact that under the assumption of equicontinuity (see Definition \ref{def: equicont} below), the Bishop-Phelps-Bollob\'{a}s property of simultaneous approximations (as defined in Definition \ref{def: BPB operators}) holds for $A(\bd)$. In fact, we prove much more: under the equicontinuity assumption, $(X, A(\mathbb{D}))$ satisfies the Bishop-Phelps-Bollob\'{a}s property for arbitrary Banach space $X$.

A similar conclusion holds for the Asplund operators on uniform algebras \cite{Aron 2, CASCALES}. However, $A(\bd)$ is not Asplund, and on the other hand, the present equicontinuity condition appears to be more practical in terms of checking the relevant conditions (cf. Section \ref{sec:final}).

The paper is structured as follows. Besides this section, there are two more sections in this paper. The first one addresses the main result, while the final section provides examples of operators that do (as well as do not) admit norms, and operators on $A(\bd)$ that satisfy the main theorem's hypothesis.

\section{The main result}

In this section, we prove the Bishop-Phelps-Bollob\'{a}s property for $A(\bd)$. We begin by recalling the definition of equicontinuity of a family of functions between Banach spaces.
	
\begin{definition}\label{def: equicont}
Let $X$ and $Y$ be two Banach spaces, and let $x_0 \in X$. A family of functions $\mathcal{F}$ from $X$ to $Y$ is said to be equicontinuous at $x_0$ if for every $\epsilon>0$, there exists $\delta>0$ such that
\[
\|f(x)-f(x_0)\|<\epsilon,
\]
for all $f \in \mathcal{F}$ and $x \in X$ such that $\|x-x_0\|<\delta$.
\end{definition}
	
Now we proceed to the main result of this paper. The starting point of the proof of the result is similar to that of the space $H^\infty(\bd)$ \cite[Lemma 2.1]{Bala} (as well as some other spaces, cf. \cite[Lemma 2.5]{CASCALES} and \cite[Lemma 3]{Kim}). However, as one progresses, the concept of the proof diverges significantly from that of previous spaces.
	
In what follows, we will adopt a specific notational convention. For each $f\in A(\bd)$, we will denote by $\tilde{f}$ the continuous extension of $f$ to all of $\bar{\mathbb{D}}$.

\begin{theorem}\label{thm}
Let $X$ be a Banach space, $x_0\in S_{X}$, $T \in S_{\mathcal{B}(X,A(\mathbb{D}))}$, $\epsilon \in (0,1)$, and let $\theta_0$ be a real number. Suppose
\[
\mathcal{F} = \{\widetilde{Tx}~~|~~ x \in S_{X}\},
\]
is equicontinuous at $e^{i\theta_0}$, and
\[
\left|\widetilde{Tx_0}(e^{i\theta_0})\right|>1-\dfrac{\epsilon}{3}.
\]
Then there exist $y_0\in S_{X}$ and $N \in S_{\mathcal{B}(X,A(\mathbb{D}))}$ such that
\begin{enumerate}
\item $\|N y_0\| =1$,
\item $\|x_0 - y_0\|< \sqrt{2\epsilon}$, and
\item $\|T - N\|<8\epsilon$.
\end{enumerate}
\end{theorem}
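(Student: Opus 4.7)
The starting point is to pass to the boundary-evaluation functional $f\in X^{*}$ defined by $f(x)=\widetilde{Tx}(e^{i\theta_{0}})$. Since $\|T\|=1$ we have $\|f\|\leq 1$, and the hypothesis yields $|f(x_{0})|>1-\epsilon/3$, hence also $\|f\|>1-\epsilon/3$. Applying Theorem~\ref{BPB} to $f/\|f\|\in S_{X^{*}}$ and $x_{0}\in S_{X}$ with parameter $\epsilon/3$ produces $y_{0}\in S_{X}$ and $h\in S_{X^{*}}$ with $|h(y_{0})|=1$, $\|x_{0}-y_{0}\|<\sqrt{2\epsilon/3}<\sqrt{2\epsilon}$ (giving~(2)), and $\bigl\|h-f/\|f\|\bigr\|_{X^{*}}<\sqrt{2\epsilon/3}$. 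A unimodular adjustment $g:=\overline{h(y_{0})}\,h$ produces $g\in S_{X^{*}}$ with $g(y_{0})=1$.

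Next I would combine equicontinuity with an $A(\mathbb{D})$-peak function. For a small parameter $\eta>0$ to be fixed, equicontinuity of $\mathcal{F}$ at $e^{i\theta_{0}}$ gives $\delta>0$ with $|\widetilde{Tx}(z)-f(x)|<\eta$ for all $x\in S_{X}$ and all $z\in\overline{\mathbb{D}}$ satisfying $|z-e^{i\theta_{0}}|<\delta$. Choose $n$ so large that $\phi(z):=\bigl((1+e^{-i\theta_{0}}z)/2\bigr)^{n}$, which lies in $A(\mathbb{D})$ with $\phi(e^{i\theta_{0}})=1$ and $\|\phi\|_{\infty}=1$, also satisfies $|\phi(z)|<\eta$ whenever $|z-e^{i\theta_{0}}|\geq\delta$. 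Define the candidate operator $N\colon X\to A(\mathbb{D})$ by
\[
(Nx)(z)=(Tx)(z)+\phi(z)\bigl(g(x)-f(x)\bigr);
\]
then $Nx\in A(\mathbb{D})$ for every $x$, and $(Ny_{0})(e^{i\theta_{0}})=g(y_{0})=1$, so $\|Ny_{0}\|\geq 1$. On the neighbourhood $U:=\{|z-e^{i\theta_{0}}|<\delta\}\cap\overline{\mathbb{D}}$ equicontinuity allows one to write $(Nx)(z)=(1-\phi(z))f(x)+\phi(z)g(x)+\alpha_{x,z}$ with $|\alpha_{x,z}|<\eta$, producing $|(Nx)(z)|\leq 1+\|g-f\|_{X^{*}}+\eta$; off $U$ smallness of $|\phi|$ gives $|(Nx)(z)|\leq 1+2\eta$. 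Since $\|T-N\|=\|\phi\|_{\infty}\|g-f\|_{X^{*}}=\|g-f\|_{X^{*}}$, combining the Bishop--Phelps--Bollob\'as bound with the triangle-inequality control of the unimodular adjustment $|\overline{h(y_{0})}-\|f\||\leq|h(y_{0})-1|+(1-\|f\|)$ and with the $\|f\|$-renormalisation will, after a careful bookkeeping, yield $\|T-N\|<8\epsilon$ and so establish~(3).

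The principal obstacle is to enforce simultaneously the equalities $\|N\|=1$ and $\|Ny_{0}\|=1$ required by (1): the estimates above give only $\|Ny_{0}\|\geq 1$ and $\|N\|\leq 1+O(\eta+\sqrt{\epsilon})$, and a blunt normalisation $N/\|N\|$ would drop $\|Ny_{0}\|$ below one. The remedy exploits the peak-function localisation decisively: near $e^{i\theta_{0}}$ equicontinuity forces $(Nx)(z)$ to be close to $(1-\phi(z))f(x)+\phi(z)g(x)$, whose modulus is essentially dominated by $|g(x)|\leq 1=g(y_{0})$, so the ``excess'' in the bound on $\|N\|$ is actually attained only in the direction of $y_{0}$; far from $e^{i\theta_{0}}$ the factor $\phi$ suppresses any excess. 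By letting $\eta$ decay strictly faster than the Bishop--Phelps--Bollob\'as error and, if needed, absorbing a small rank-one correction into the same error budget, one arranges that $N$ attains its operator norm precisely at $y_{0}$, so that after renormalisation $\|N\|=\|Ny_{0}\|=1$. It is at this step that the equicontinuity hypothesis, which has no counterpart in the $H^{\infty}(\mathbb{D})$ setting of \cite{Bala}, plays its decisive role.
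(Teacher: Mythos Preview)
Your construction has a genuine gap at precisely the step you flag as the ``principal obstacle'': forcing $\|N\|=1$. Your peak function $\phi$ satisfies $\|\phi\|_\infty=1$ and $\phi(e^{i\theta_0})=1$, but it does \emph{not} satisfy $|1-\phi(z)|+|\phi(z)|\le 1$ on $\overline{\mathbb D}$; for instance, at points where $\phi(z)$ lies off the segment $[0,1]$ this sum strictly exceeds~$1$. Consequently, for $x\in S_X$ with $f(x)$ and $g(x)$ both close to unimodular but with suitably chosen arguments, the expression $(1-\phi(z))f(x)+\phi(z)g(x)$ can have modulus $>1$. The claimed heuristic that the excess is ``attained only in the direction of $y_0$'' is therefore false, and no renormalisation or small rank-one correction will repair it without destroying either $\|Ny_0\|=1$ or the $\|T-N\|$ estimate. (Note also that your formula gives $\|T-N\|=\|g-f\|$, which is of order $\sqrt{\epsilon}$ from Bishop--Phelps--Bollob\'as, not of order $\epsilon$; the ``careful bookkeeping'' you promise cannot change this.)

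The paper's device, which you are missing, is to replace the naive peak function by one whose range lies in the Stolz region $\Omega_\epsilon=\{w:|w|+(1-\epsilon)|1-w|\le 1\}$. One first builds a peak function $\tilde h\in A(\mathbb D)$ with $\tilde h(e^{i\theta_0})=1$, $\|\tilde h\|_\infty\le 1$, and $|\tilde h|$ small off the equicontinuity neighbourhood $U$, and then post-composes with a conformal map $\psi_\epsilon:\overline{\mathbb D}\to\Omega_\epsilon$ fixing $0$ and $1$. The resulting $\eta=\psi_\epsilon\circ\tilde h$ satisfies $|\eta(z)|+(1-\epsilon)|1-\eta(z)|\le 1$ identically. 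One then defines
\[
(Nx)(z)=\eta(z)\,\Psi_2(x)+(1-\epsilon)(1-\eta(z))\,(Tx)(z),
\]
where $\Psi_2$ is the BPB functional. The Stolz inequality gives $\|N\|\le 1$ for free, while $\eta(e^{i\theta_0})=1$ gives $|\widetilde{Ny_0}(e^{i\theta_0})|=|\Psi_2(y_0)|=1$, so $\|N\|=\|Ny_0\|=1$ with no renormalisation needed. The $(1-\epsilon)$ scaling of $T$ and the smallness of $|\eta|$ off $U$ then combine with equicontinuity on $U$ to control $\|T-N\|$.
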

\begin{proof}
Since $0 < \epsilon < 1$, the Stolz region $\Omega_{\epsilon}$ is well defined, where
\[
\Omega_{\epsilon}: = \{z\in\bc:|z|+(1-\epsilon)|1-z|\leq 1\}.
\]
By the above definition, it is evident that $0, 1 \in \Omega_{\epsilon}$. Moreover, there exists a homeomorphism $\psi_{\epsilon}: \overline{\bd}\rightarrow \Omega_{\epsilon}$ (a refinement of the Riemann mapping theorem, see \cite[Theorem 14.8, 14.19]{Rudin}) such that
\begin{enumerate}
\item $\psi_{\epsilon}|_{\bd}$ is a conformal mapping onto the interior of $\Omega_{\epsilon}$,
\item $\psi_{\epsilon}(1)=1$, and
\item $\psi_{\epsilon}(0)=0$.
\end{enumerate}
By construction of $\Omega_{\epsilon}$, it follows that $\epsilon^2 \overline{\bd} \subseteq \Omega_\epsilon$. Moreover, since $0$ is in the open set ${\psi_\epsilon}^{-1}(\epsilon^2 \bd)$, there exists $\delta_1>0$ such that $\delta_1<\epsilon$ and
\begin{align}\label{Eq:1}
\delta_1 \bd \subseteq {\psi_\epsilon}^{-1}(\epsilon^2 \bd).
\end{align}
Now we deviate from the proof of \cite[Lemma 2.1]{Bala}. Since $\mathcal{F}=\{\widetilde{Tx}~~|~~ x \in S_{X}\}$ is equicontinuous at $e^{i\theta_0}$, there exists $\delta_2>0$ such that
\begin{align}\label{Eq:2}
|\widetilde{Tx}(z)-\widetilde{Tx}(e^{i\theta_0})|<\epsilon,
\end{align}
for all $z \in \overline{\mathbb{D}}$ such that $|z-e^{i\theta_0}|<\delta_2$. Define an open set $U$ in $\overline{\mathbb{D}}$ by
\[
U=\{z \in \overline{\mathbb{D}}: |z-e^{i\theta_0}|<\delta_2\}.
\]
Since $e^{i\theta_0} \in \mathbb{T}$, there exist $\tilde{g_1} \in A(\mathbb{D})$ such that (see \cite[page 80-81]{HOFFMAN})
\[
\tilde{g_1}(e^{i\theta_0})=0,
\]
and
\[
\mbox{Re}\tilde{g_1}(z)<0 \qquad (z \in \overline{\mathbb{D}}\setminus K),
\]
where
\[
K=\{e^{i\theta_0}\}.
\]
By the continuity of $\tilde{g_1}$ on $\overline{\mathbb{D}}$, we find a number $\gamma>0$ such that
\[
\mbox{Re}\tilde{g_1}(z) \leq -\gamma \qquad (z \in \overline{\mathbb{D}}\setminus U).
\]
Choose $0<\epsilon_1 < 1$ such that
\[
\epsilon_1^\gamma< \delta_1.
\]
Since $e^{-n} \raro 0$ as $n \raro \infty$, there exists $n_0\in \mathbb{N}$ such that
\[
e^{-n_0}<\epsilon_1.
\]
Define
\[
\tilde{h}(z) :=e^{n_0\tilde{g_1}(z)} \qquad (z \in \overline{\mathbb{D}}).
\]
In view of $\mbox{Re}\tilde{g_1}(z) \leq 0$, we find
\[
\begin{split}
|\tilde{h}(z)| & = e^{n_0\mbox{Re}\tilde{g_1}(z)}
\\
& \leq e^0
\\
& =1,
\end{split}
\]
whereas $\tilde{g_1}(e^{i \theta_0})=0$ yields
\[
\tilde{h}(e^{i \theta_0})=e^{n_0\tilde{g_1}(e^{i \theta_0})}=1.
\]
For each $z \in \overline{\mathbb{D}}\setminus U$, we have
\[
\begin{split}
|\tilde{h}(z)| & = e^{n_0\mbox{Re}\tilde{g_1}(z)}
\\
& =e^{{-n_0}^{-\mbox{Re}\tilde{g_1}(z)}}
\\
& <\epsilon_1^{-\mbox{Re}\tilde{g_1}(z)}.
\end{split}
\]
From the fact that $-\mbox{Re}\tilde{g_1}(z)\geq \gamma$ for all $z \in \overline{\mathbb{D}}\setminus U$, and $0<\epsilon_1<1$, we conclude that
\[
|\tilde{h}(z)|<\epsilon_1^\gamma.
\]
By the inequality $\epsilon_1^\gamma<\delta_1$, we must have
\[
|\tilde{h}(z)| < \delta_1 \qquad (z \in \overline{\mathbb{D}}\setminus U).
\]
Define $\eta$ on $\overline{\mathbb{D}}$ by
\[
\eta:=\Psi_{\epsilon} \circ \tilde{h}.
\]
Then
\[
\begin{split}
\eta(e^{i \theta_0}) & = (\Psi_{\epsilon}\circ \tilde{h})(e^{i \theta_0})
\\
& =\Psi_{\epsilon}(1)
\\
& = 1,
\end{split}
\]
and
\[
\begin{split}
\eta(\overline{\mathbb{D}}\setminus U)& = \Psi_{\epsilon}(\tilde{h}(\overline{\mathbb{D}}\setminus U))
\\
& \subseteq \Psi_{\epsilon}(\delta_1\mathbb{D})
\\
& \subseteq \epsilon^2\mathbb{D}.
\end{split}
\]
Moreover,
\[
\begin{split}
|\eta(z)|& = |\Psi_{\epsilon}(\tilde{h}(z))|
\\
& \leq |\tilde{h}(z)|
\\
& \leq 1,
\end{split}
\]
for all $z \in \overline{\mathbb{D}}$ yields $\|\eta\|\leq 1$. Also, for $z \in \overline{\mathbb{D}}$, we have
\[
\begin{split}
|\eta(z)|+(1-\epsilon)|1-\eta(z)| & = |\Psi_{\epsilon}(\tilde{h}(z))|+(1-\epsilon)|1-\Psi_{\epsilon}(\tilde{h}(z))|
\\
& \leq 1.
\end{split}
\]
Define a functional $\Psi: X \longrightarrow \mathbb{C}$ by
\[
\Psi x=\widetilde{Tx}(e^{i \theta_0}) \qquad (x \in X).
\]
It is easy to see that $\|\Psi\| \leq 1$. Also
\[
\begin{split}
|\Psi x_0|& = |\widetilde{Tx_0}(e^{i \theta_0})|
\\
& >1-\dfrac{\epsilon}{3}.
\end{split}
\]
Finally, define $\Psi_1=\dfrac{\Psi}{\|\Psi\|}: X \longrightarrow \mathbb{C}$. Then $\|\Psi_1\|= 1$, and
\[
\begin{split}
|\Psi_1 x_0| & =\left|\dfrac{\Psi}{\|\Psi\|}x_0\right|
\\
& \geq |\Psi x_0|
\\
& > 1-\dfrac{\epsilon}{3}.
\end{split}
\]
Now we can apply the Bishop-Phelps-Bollob\'{a}s theorem, Theorem \ref{BPB}, to $\Psi_1$. Consequently, there exist a vector $y_0\in S_{X}$ and a linear functional $\Psi_2: X \raro \mathbb{C}$ such that $\|\Psi_2\|=1$ and
\begin{enumerate}[(i)]
\item $|\Psi_2(y_0)|=1$,
\item $\|y_0-x_0\|\leq \sqrt{2\epsilon} $, and
\item $\left \|\Psi_1 -\Psi_2\right\|< \sqrt{2\epsilon}$.
\end{enumerate}
Define a linear operator $N:X \longrightarrow A(\mathbb{D})$ by
\begin{equation}\label{eqn: N}
(Nx)(z):=\eta(z)\Psi_2(x)+(1-\epsilon)(1-\eta(z))Tx(z),
\end{equation}
for all $x \in X$ and $z \in \bd$. Since
\begin{align*}
\left|Nx (z)\right| & \leq |\eta(z)| |\Psi_2(x)| +(1-\epsilon) |1- \eta(z)| |Tx(z)|
\\
& \leq |\eta(z)|+(1-\epsilon)|1-\eta(z)|
\\
& \leq 1,
\end{align*}
for all $z \in \bd$, it follows that $\|N\|\leq 1$. Also, note that
\begin{align*}
\left|\widetilde{Ny_0} (e^{i\theta_0})\right| & = \Big|\eta(e^{i\theta_0}) \Psi_2(y_0) +(1-\epsilon) (1- \eta(e^{i\theta_0})) \widetilde{Ty_0}(e^{i\theta_0})\Big|
\\
& = |\Psi_2(y_0)|
\\
& = 1.
\end{align*}
Fix $x \in S_{X}$. Then
\begin{align*}
\|(N-T)x\|
&= \|\eta \Psi_2(x)+(1-\epsilon)(1-\eta)Tx-Tx\|
\\
&=\|\eta \Psi_2(x)-\epsilon(1-\eta)Tx+Tx-\eta Tx-Tx\|
\\
&=\|\eta(\Psi_2(x)-Tx)-\epsilon(1-\eta)Tx\|
\\
& \leq \|\eta(\Psi_2(x)-Tx)\|+\epsilon \|1-\eta\| \|Tx\|
\\
& \leq \|\eta(\Psi_2(x)-Tx)\|+2\epsilon.
\end{align*}
Next, we estimate
\begin{align*}
\|\eta(\Psi_2 x-Tx)\|
& = \|\eta (\Psi_2x-\Psi_1x+\Psi_1x-\Psi x+\Psi x-Tx)\|\\
& \leq \|\eta\|\|\Psi_2 x-\Psi_1 x\|+\|\eta\|\|\Psi_1 x-\Psi x\|+\|\eta(\Psi x-Tx)\|\\
& \leq \sqrt{2\epsilon}+\left\|\dfrac{\Psi}{\|\Psi\|}x-\Psi x\right \|+\|\eta(\widetilde{Tx}(e^{i \theta_0})-Tx)\|\\
&\leq \sqrt{2\epsilon}+|1-\|\Psi\||+\|\eta(\widetilde{Tx}(e^{i\theta_0})-Tx)\|\\
&\leq \sqrt{2\epsilon}+\dfrac{\epsilon}{3}+\|\eta(\widetilde{Tx}(e^{i\theta_0})-Tx)\|,
\end{align*}
and, finally
\begin{align*}
\|\eta(\widetilde{Tx}(e^{i\theta_0})-\widetilde{Tx})\|
&=\sup_{z \in \overline{\mathbb{D}}}|\eta(z)||\widetilde{Tx}(e^{i \theta_0})-\widetilde{Tx}(z)|\\
&\leq \sup_{z\in \overline{\mathbb{D}}\setminus U}|\eta(z)||\widetilde{Tx}(e^{i \theta_0})-\widetilde{Tx}(z)|+\sup_{z \in U}|\eta(z)||\widetilde{Tx}(e^{i \theta_0})-\widetilde{Tx}(z)|\\
&\leq 2 \epsilon^2 +\sup_{z \in U}|\eta(z)||\widetilde{Tx}(e^{i \theta_0})-\widetilde{Tx}(z)|\\
&<2\epsilon+\epsilon.
\end{align*}
We conclude finally that $\|S-T\|<8 \epsilon$. This completes the proof.
\end{proof}

In particular, we have the following Bishop-Phelps-Bollob\'{a}s property for the disc algebra:

\begin{corollary} \label{cor}
Let $T \in \mathcal{B}(A(\mathbb{D}))$, $f_0\in S_{A(\mathbb{D})}$, $\|T\|=1$, $\epsilon \in (0,1)$, and let $\theta_0$ be a real number. Suppose
\[
\mathcal{F} = \{\widetilde{Tg}~~|~~ g \in S_{A(\mathbb{D})}\},
\]
is equicontinuous at $e^{i\theta_0}$, and
\[
\left|\widetilde{Tf_0}(e^{i\theta_0})\right|>1-\dfrac{\epsilon}{3}.
\]
Then there exist $N \in S_{\mathcal{B}(A(\mathbb{D}))}$ and $g_0\in S_{A(\mathbb{D})}$ such that
\begin{enumerate}
\item $\|N g_0\| =1$,
\item $\|f_0 - g_0\|< \sqrt{2\epsilon}$, and
\item $\|T - N\|<8\epsilon$.
\end{enumerate}

\end{corollary}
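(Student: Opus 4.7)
The corollary is a direct specialization of Theorem \ref{thm} to the case in which the domain Banach space is itself the disc algebra. My plan is to verify that all hypotheses of Theorem \ref{thm} are met upon setting $X = A(\mathbb{D})$ and to translate the conclusions back using the notational change $x_0 \leftrightarrow f_0$ and $y_0 \leftrightarrow g_0$.

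Concretely, I would instantiate Theorem \ref{thm} by choosing $X = A(\mathbb{D})$ (a Banach space, as required), taking the norm-one element of $S_X$ to be $f_0$, taking the same operator $T \in \mathcal{B}(A(\mathbb{D})) = \mathcal{B}(X, A(\mathbb{D}))$ of norm one, keeping $\epsilon \in (0,1)$ and $\theta_0 \in \mathbb{R}$ unchanged. Under this identification, the family $\mathcal{F} = \{\widetilde{Tg} : g \in S_{A(\mathbb{D})}\}$ in the statement of the corollary is precisely the family $\{\widetilde{Tx} : x \in S_X\}$ appearing in Theorem \ref{thm}, so equicontinuity of $\mathcal{F}$ at $e^{i\theta_0}$ is the equicontinuity hypothesis of the theorem, and the inequality $|\widetilde{Tf_0}(e^{i\theta_0})| > 1 - \epsilon/3$ is exactly the required numerical hypothesis on the distinguished unit vector.

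I would then invoke Theorem \ref{thm} to obtain $y_0 \in S_X$ and $N \in S_{\mathcal{B}(X, A(\mathbb{D}))}$ satisfying $\|N y_0\| = 1$, $\|f_0 - y_0\| < \sqrt{2\epsilon}$, and $\|T - N\| < 8\epsilon$. Relabelling $y_0$ as $g_0 \in S_{A(\mathbb{D})}$ and observing that $\mathcal{B}(X, A(\mathbb{D})) = \mathcal{B}(A(\mathbb{D}))$ in this setting, the three conclusions read off as items (1)--(3) of the corollary.

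There is no genuine obstacle here: the corollary is merely the operator-algebra formulation of the general Banach-space statement already proved, and the only thing to check is that the hypothesis of equicontinuity of $\mathcal{F}$ is formally identical under the two conventions. Consequently the proof amounts to one line: apply Theorem \ref{thm} with $X = A(\mathbb{D})$.
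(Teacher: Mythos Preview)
Your proposal is correct and matches the paper's approach exactly: the paper states the corollary immediately after Theorem~\ref{thm} without a separate proof, treating it as the evident specialization $X = A(\mathbb{D})$. Your verification that the hypotheses and conclusions translate directly under this identification is precisely what is implicit in the paper's phrase ``In particular.''
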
	
	
We do not know how to remove the equicontinuity assumption from the preceding theorem. Furthermore, it is improbable that the equicontinuity assumption is necessary for norm attainment, but we have no evidence to support this claim either. In the following section, we will demonstrate the existence of operators that satisfy the equicontinuity condition.

\section{Examples}\label{sec:final}

Nontrivial examples of operators attaining or failing to attain norms have been of general interest. The goal of this final section is to provide some examples of operators in that range. We also draw a general observation concerning operator ideals of $\clb(A(\mathbb{D}))$.

We begin with two simple (yet classical) classes of operators that attain their norms. Let $\vp\in A(\mathbb{D})$. Define the multiplication operator $M_{\phi}$ and the composition operator $C_{\vp}$ on $A(\mathbb{D})$ by
\[
M_{\vp}f= \vp f \qquad (f \in A(\bd)),
\]
and
\[
C_{\vp}f=f\circ \vp  \qquad (f \in A(\bd)),
\]
respectively. For the case of the composition operator, we must assume in addition that $\vp$ is a self-map of $\bd$. In either case, we observe that
\[
\|M_{\vp}\|=\|\vp\|_{\infty}=\|M_{\vp}1\|,
\]
and
\[
\|C_{\vp}\|=1=\|C_{\vp}1\|.
\]
Consequently, $M_{\vp}$ and $C_{\vp}$ are norm attaining operators.

Nontrivial examples of not norm-attaining operators are based on another important space, namely, the Hardy space $H^2(\mathbb{D})$, and the disc algebra $A(\mathbb{D})$. Recall that (see \cite{Garnett, HOFFMAN})
\[
H^2(\bd) = \Big\{f \in \text{Hol}(\bd): \|f\|_2:= \Big(\sup_{0<r<1} \dfrac{1}{2\pi} \int_{\mathbb{T}} |f(rz)|^2 d\mu(z)\Big)^{\frac{1}{2}} < \infty\Big\},
\]
where $d\mu$ is the normalized Lebesgue measure on $\mathbb{T}$ ($= \partial \bd$). It is known that a function
\[
f(z) =  \sum\limits_{n=0}^{\infty} a_nz^n \in \text{Hol}(\bd),
\]
is in $H^2(\bd)$ if and only if
\[
\|f\|_2 = \Big(\sum\limits_{n=0}^{\infty} |a_n|^2 \Big)^{\frac{1}{2}}< \infty.
\]
Now, we are ready to provide a bounded linear operator $T: A(\mathbb{D}) \raro H^2(\mathbb{D})$ which is not norm-attaining.

\begin{example}
For each $f(z)=\sum_{n=0}^{\infty}a_nz^n \in H^2(\mathbb{D})$, define a linear operator $T: A(\mathbb{D}) \longrightarrow H^2(\mathbb{D})$ by
\begin{align*}
(Tf)(z)=\sum_{n=0}^\infty \left(1-\dfrac{1}{n+1}\right)a_nz^n.
\end{align*}
We compute
\begin{align*}
\|Tf\|_2 & = \left(\sum_{n=0}^{\infty}\left(1-\dfrac{1}{n+1}\right)^2|a_n|^2\right)^{\dfrac{1}{2}}
\\
& < \Big(\sum_{n=0}^{\infty}|a_n|^2\Big)^{\frac{1}{2}}
\\
& =\|f\|_2
\\
& \leq \|f\|_{\infty}.
\end{align*}
This implies that $T$ is bounded as well as not norm-attaining.
\end{example}

It is crucial to build examples that satisfy the hypothesis of this paper's main result. In the following example, we do exactly that. Note that by the James theorem \cite{J1, J2}, there exist non-norm attaining functionals in $A(\bd)^*$.

\begin{example}\label{example of disc}
Fix a functional $x^* \in A(\mathbb{D})^*$, which does not attain the norm. Also fix a nonzero $h \in A(\mathbb{D})$. Define $T \in \mathscr{B}(A(\mathbb{D}))$ by
\begin{align*}
Tf=x^*(f)h,
\end{align*}
for all $f \in A(\mathbb{D})$. A routine computation reveals that $\|T\|=\|x^*\|\|h\|$, and that $T$ is not norm-attaining. Next, we consider the collection
\[
\mathcal{F}=\{\widetilde{Tf}: f \in S_{A(\mathbb{D})}\}.
\]
By the definition of $T$, it follows that
\[
\mathcal{F} =\{x^*(f)\tilde{h}: f \in S_{A(\mathbb{D})}\}.
\]
Fix $e^{i\theta_0}\in \mathbb{T}$ and $\epsilon>0$. We claim that $\mathcal{F}$ is equicontinuous at $e^{i\theta_0}$. By continuity of $\tilde{h}$, there exists $\delta>0$ such that
\[
\|\tilde{h}(z)-\tilde{h}(e^{i\theta_0})\|<\dfrac{\epsilon}{\|x^*\|},
\]
whenever $|z-e^{i\theta_0}|<\delta$. For all $z$ such that $|z-e^{i\theta_0}|<\delta$, we have
\begin{align*}
\|\widetilde{Tf}(z)-\widetilde{Tf}(e^{i\theta_0})\|
& =\|x^*(f)\tilde{h}(z)-x^*(f)\tilde{h}(e^{i\theta_0})\|
\\
& \leq \|x^*(f)\|\|\tilde{h}(z)-\tilde{h}(e^{i\theta_0})\|
\\
& \leq \|x^*\|\|\tilde{h}(z)-\tilde{h}(e^{i\theta_0})\|
\\
&< \|x^*\|\dfrac{\epsilon}{\|x^*\|}
\\
& =\epsilon,
\end{align*}
which proves that $\mathcal{F}$ is equicontinuous at $e^{i\theta_0}\in \mathbb{T}$. Therefore, $T$ satisfies the hypotheses of Theorem \ref{thm}.
\end{example}

In particular, Theorem \ref{thm} leads us to conclude that $T$, as constructed in Example \ref{example of disc} above, can be approximated by norm-attaining operators. Additional examples along these lines include:

\begin{example}
Suppose $X$ is a non-reflexive Banach space. By the James theorem, there exists a linear functional $x_0^*\in X^*$, which fails to attain the norm. Fix a non-zero function $h$ in $A(\mathbb{D})$. Define $T\in\mathscr{B}(X, A(\mathbb{D}))$ by
\[
Tx=x_0^*(x)h \qquad (x \in X).
\]
By similar computation, as in Example \ref{example of disc}, we conclude that $T$ is not norm attaining and the family
\[
\mathcal{F}=\{x_0^*(x)h: x\in S_X\},
\]
is equicontinuous at every point of $\mathbb{T}$.
\end{example}

Finally, an example of norm attaining operator on $A(\bd)$:

\begin{example}
Let $x^* \in A(\mathbb{D})^*$, $\|x^*\|=1$, be a norm attaining functional. There exists $f_0 \in S_{A(\bd)}$ such that $|x^*(f_0)| = 1$. Fix a nonzero $h \in A(\mathbb{D})$. Define $T \in \mathscr{B}(A(\mathbb{D}))$ by
\begin{align*}
Tf=x^*(f)h,
\end{align*}
for all $f \in A(\mathbb{D})$. A routine computation ensures that
\[
\|T\|=\|x^*\|\|h\|.
\]
Then
\[
\begin{split}
\|Tf_0\|& =\|x^*(f_0)h\|
\\
& =|x^*(f_0)|\|h\|
\\
& =\|x^*\|\|h\|
\\
&=\|T\|,
\end{split}
\]
implies that $T$ is norm attaining.
\end{example}

The above example, in particular, applies to evaluation functionals. For a fixed $z_0 \in \bd$, define the evaluation functional $ev_{z_0} \in A(\bd)^*$ by
\[
ev_{z_0} f = f(z_0) \qquad (f \in A(\bd)).
\]
Clearly
\[
\|ev_{z_0}\|=1=|ev_{z_0}(1)|,
\]
that is, $ev_{z_0}$ is norm attaining. Moreover, $\|T1\|=\|T\|$, where $T$ is defined as in the above example.

Now we turn to the operator ideals of $\clb(A(\mathbb{D}))$. Given a Banach space $X$, a subset $\mathscr{I} \subseteq \mathscr{B}(X)$ is an \textit{operator ideal} if $\mathscr{I}$ contains all operators of finite rank and
\[
T_1\circ T \circ T_2 \in \mathscr{I},
\]
for all $T \in \mathscr{I}$ and $T_1,T_2 \in \mathscr{B}(X)$. The proof of the subsequent corollary is, in essence, similar to that of \cite[Corollary 4.1]{Bala}.

\begin{corollary}
Let $\mathscr{I} \subseteq \clb(A(\mathbb{D}))$ be an operator ideal, $T \in \mathscr{I}$, $\|T\|=1$, $f_0\in S_{A(\mathbb{D})}$, $\epsilon \in (0,1)$, and let $\theta_0$ be a real number. Suppose $\mathcal{F}=\{\widetilde{Tg}: g \in S_{A(\mathbb{D})}\}$ is equicontinuous at $e^{i\theta_0}$, and
$$
\left|\widetilde{Tf_0}(e^{i\theta_0})\right|>1-\dfrac{\epsilon}{3}.
$$
Then there exist $N \in \mathscr{I}$ and $g_0\in S_{A(\mathbb{D})}$ such that
\begin{enumerate}
\item $\|N g_0\|=\|N\|=1$,
\item $\|f_0 - g_0\|< \sqrt{2\epsilon}$, and
\item $\|T - N\|<8\epsilon$.
\end{enumerate}
\end{corollary}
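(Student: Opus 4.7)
The plan is to apply Corollary \ref{cor} to the given data and then, by inspecting the explicit formula produced in the proof of Theorem \ref{thm}, observe that the resulting $N$ automatically lies in the operator ideal $\mathscr{I}$.

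First, I would invoke Corollary \ref{cor}, whose hypotheses coincide with the ones assumed here. This yields $g_0 \in S_{A(\bd)}$ and $N \in S_{\clb(A(\bd))}$ such that $\|Ng_0\| = 1$, $\|f_0 - g_0\| < \sqrt{2\epsilon}$, and $\|T - N\| < 8\epsilon$. Since $\|N\|=1$, the strengthened statement $\|Ng_0\|=\|N\|=1$ required in item~(1) is already in place. The remaining task is to upgrade the membership ``$N \in \clb(A(\bd))$'' to ``$N \in \mathscr{I}$''.

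To do this, I would unpack the formula \eqref{eqn: N} from the proof of Theorem \ref{thm}, namely
\[
(Nf)(z) = \eta(z)\,\Psi_2(f) + (1-\epsilon)\bigl(1-\eta(z)\bigr)(Tf)(z),
\]
where $\eta = \psi_\epsilon \circ \tilde{h}$ lies in $A(\bd)$ (being the composition of a conformal map with an $A(\bd)$-function) and $\Psi_2 \in A(\bd)^*$. Writing $N = N_1 + N_2$ with $N_1 f := \Psi_2(f)\,\eta$ and $N_2 := (1-\epsilon)\, M_{1-\eta} \circ T$, I observe that $N_1$ has rank one and therefore lies in every operator ideal, in particular in $\mathscr{I}$; while $N_2$ is the left composition of $T \in \mathscr{I}$ with the bounded multiplication operator $M_{1-\eta} \in \clb(A(\bd))$, hence $N_2 \in \mathscr{I}$ by the ideal property. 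Summing, $N = N_1 + N_2 \in \mathscr{I}$, and the three estimates transfer verbatim from Corollary \ref{cor}.

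No genuine obstacle arises; the content is simply the recognition that the operator manufactured in Theorem \ref{thm} decomposes as a rank-one piece plus a left-multiple of the input $T$. The one mild point to check is that $\eta$ actually belongs to $A(\bd)$, which is immediate from its construction as $\psi_\epsilon \circ \tilde{h}$, both factors being continuous on $\overline{\bd}$ and analytic on $\bd$.
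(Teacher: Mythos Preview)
Your proposal is correct and follows essentially the same route as the paper: apply Corollary~\ref{cor}, then split the explicit operator $N$ from \eqref{eqn: N} as $N_1 + N_2$ with $N_1$ rank one and $N_2 = (1-\epsilon)M_{1-\eta}\circ T$, and use the ideal axioms to conclude $N \in \mathscr{I}$. The only addition you make is the brief verification that $\eta \in A(\bd)$, which the paper leaves implicit.
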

\begin{proof}
By applying Corollary \ref{cor} to $T$, it is enough to show that $N \in \mathscr{I}$, where $N$ is defined as in \eqref{eqn: N}:
\[
(Ng)(z):= \eta(z)\Psi_2(g) + (1-\epsilon)(1-\eta(z))Tg(z),
\]
for all $g \in A(\bd)$ and $z \in \bd$. We write
\[
N = N_1 + N_2,
\]
where
\[
(N_1g)(z)=\eta(z)\Psi_2(g),
\]
and
\[
(N_2g)(z)=(1-\epsilon)(1-\eta(z))Tg(z),
\]
for all $g \in A(\bd)$ and $z \in \bd$. Since $\mathscr{I}$ is an ideal, it is enough to prove that $N_1, N_2 \in \mathscr{I}$. We see that $N_1 \in \mathscr{I}$ as $N_1$ is a rank one operator. Consequently, $ T \in \mathscr{I}$ gives us $N_2 \in \mathscr{I}$, and completes the proof.
\end{proof}
	
The same result holds for weakly compact operators as in \cite{Bala}.

\vspace{0.1in}
	
\noindent\textbf{Acknowledgement:}
The research of first author is supported by the Theoretical Statistics and Mathematics Unit, Indian Statistical Institute, Bangalore, India.
The research of the second named author is supported in part by TARE (TAR/2022/000063), SERB, Department of Science \& Technology (DST), Government of India. The research of the third named author is supported by the NBHM postdoctoral fellowship, Department of Atomic Energy (DAE), Government of India (File No: 0204/3/2020/R$\&$D-II/2445).

\end{document}